\theoremstyle{plain}
\newtheorem{theorem}{Theorem}
\newtheorem{corollary}{Corollary}
\newtheorem{proposition}{Proposition}
\theoremstyle{definition}
\newtheorem{definition}{Definition}
\newtheorem{example}{Example}
\theoremstyle{remark}
\newtheorem{remark}{Remark}
\numberwithin{equation}{section}
\begin{document}
\title[Constructions of Semi-Symmetric Frenet Curves]{Constructions of
Frenet Curves with respect to Semi-Symmetric Metric Connection}
\author{\c{S}aban G\"{u}ven\c{c}}
\address[\c{S}. G\"{u}ven\c{c}]{ Balikesir University, Balikesir, T\"{U}RK%
\.{I}YE}
\email[\c{S}. G\"{u}ven\c{c}]{sguvenc@balikesir.edu.tr}
\subjclass[2020]{Primary 53C07; Secondary 53C15, 53C25.}
\keywords{Semi-symmetric metric connection, Frenet curve, contact manifolds.}

\begin{abstract}
Using a semi-symmetric metric connection, we construct Frenet frame and
curvatures of curves in 3-dimensional manifolds and give examples of
semi-symmetric Frenet curves in Euclidean, Sasakian and Kenmotsu
manifolds.\medskip
\end{abstract}

\maketitle

\section{Introduction}

In a Riemannian manifold $\left( M^{n},g\right) $, a linear connection $%
\widetilde{\nabla }$ is called a semi-symmetric metric connection if its
torsion tensor field $T$ satisfies%
\begin{equation*}
T\left( X,Y\right) =\omega (Y)X-\omega (X)Y
\end{equation*}%
and%
\begin{equation*}
\left( \widetilde{\nabla }_{Z}g\right) \left( X,Y\right) =0,
\end{equation*}%
where $\omega $ is a $1$-form and $X,Y,Z\in \chi (M)$. Let $\nabla $ denote
the Levi-Civita connection on $M$. Then, the semi-symmetric metric
connection has the form%
\begin{equation}
\widetilde{\nabla }_{X}Y=\nabla _{X}Y+\omega (Y)X-g\left( X,Y\right) U,
\label{ssmc}
\end{equation}%
where $U$ is a vector field associated to $\omega $, i.e. $\omega
(X)=g\left( X,U\right) $, $\forall X\in \chi (M)$ \cite{Yano}.

This study consists of six sections. The first section is Introduction. In
Section \ref{sect-semisymmetric Frenet}, we introduce semi-symmetric Frenet
curves in 3-dimensional manifolds and give necessary definitions. Then we
find the necessary and sufficient conditions for semi-symmetric Frenet
curves to be semi-symmetric geodesics of $\mathbb{E}^{3},$ $\mathbb{R}%
^{3}(-3)$ and $\mathbb{H}^{3}(-1)$ in Sections \ref{sect-geodesicE3}, \ref%
{sect-geodesic-R3(-3)} and \ref{sect-geodesic-H3(-1)}, respectively. The
final section is reserved for the examples of semi-symmetric Frenet curves
in these manifolds.

\section{\label{sect-semisymmetric Frenet}Semi-Symmetric Frenet Curves in $3$%
-Dimensions}

In this section, we will define Frenet curves in $3$-dimensional Riemannian
manifolds endowed with a semi-symmetric metric connection.

Let $\gamma :I\rightarrow M$ be a unit-speed smooth curve, where $M=\left(
M,g\right) $ is a $3$-dimensional Riemannian manifold endowed with a
semi-symmetric metric connection $\widetilde{\nabla }$. Let $T=\gamma
^{\prime }$ denote the unit tangent vector field along the curve $\gamma $.
Firstly, if we differentiate $g(T,T)=1$ with respect to $\widetilde{\nabla }$%
, since it is a metric connection, we find $g(\widetilde{\nabla }_{T}T,T)=0$%
. So $\widetilde{\nabla }_{T}T\perp T.$

\begin{definition}
Let 
\begin{equation*}
\widetilde{\kappa }=\left\Vert \widetilde{\nabla }_{T}T\right\Vert =\sqrt{%
g\left( \widetilde{\nabla }_{T}T,\widetilde{\nabla }_{T}T\right) }.
\end{equation*}%
We call $\widetilde{\kappa }$ as \textit{semi-symmetric curvature of} $%
\gamma $. If $\widetilde{\kappa }=0$ (i.e. $\widetilde{\nabla }_{T}T=0$),
then $\gamma $ is called a \textit{semi-symmetric geodesic}. Semi-symmetric
geodesics are called \textit{semi-symmetric Frenet curves of order} $1$.
\end{definition}

\begin{definition}
If $\widetilde{\nabla }_{T}T\neq 0$, then 
\begin{equation*}
N=\frac{\widetilde{\nabla }_{T}T}{\widetilde{\kappa }}
\end{equation*}%
is called \textit{semi-symmetric normal of} $\gamma $.
\end{definition}

Notice that $g(N,N)=1$. Then 
\begin{equation*}
\widetilde{\nabla }_{T}g(N,N)=0,
\end{equation*}%
that is $g(\widetilde{\nabla }_{T}N,N)=0.$ Thus $\widetilde{\nabla }%
_{T}N\perp N.$

\begin{definition}
If $\widetilde{\nabla }_{T}N$ is parallel to $T$, we call $\gamma $ a 
\textit{semi-symmetric Frenet curve of order }$2$.
\end{definition}

Let $\gamma $ be a semi-symmetric Frenet curve of order $2$. $\widetilde{%
\nabla }_{T}N=\lambda T$ for some function $\lambda .$ Since $g(T,N)=0$, we
find%
\begin{equation*}
g(\widetilde{\nabla }_{T}T,N)+g\left( T,\widetilde{\nabla }_{T}N\right) =0,
\end{equation*}%
\begin{equation*}
g\left( \widetilde{\kappa }N,N\right) +g\left( T,\lambda T\right) =0,
\end{equation*}%
\begin{equation*}
g\left( T,\widetilde{\nabla }_{T}N\right) =\lambda =-\widetilde{\kappa }.
\end{equation*}%
As a result

\begin{proposition}
If $\gamma $ is a semi-symmetric Frenet curve of order $2$, then $\widetilde{%
\nabla }_{T}N=-\widetilde{\kappa }T.$
\end{proposition}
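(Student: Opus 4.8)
The plan is to differentiate the orthogonality relation $g(T,N)=0$ along the curve using the fact that $\widetilde{\nabla}$ is a metric connection, exactly as was already done to show $\widetilde{\nabla}_T N \perp N$. First I would write, from $\widetilde{\nabla}_T g(T,N) = 0$, the Leibniz expansion
\[
g\!\left(\widetilde{\nabla}_T T, N\right) + g\!\left(T, \widetilde{\nabla}_T N\right) = 0 .
\]
Next I would substitute the two structural facts available: the definition of the semi-symmetric normal gives $\widetilde{\nabla}_T T = \widetilde{\kappa} N$, and the order-$2$ hypothesis gives $\widetilde{\nabla}_T N = \lambda T$ for some function $\lambda$. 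Plugging these in and using $g(N,N)=1$ and $g(T,T)=1$ yields $\widetilde{\kappa} + \lambda = 0$, hence $\lambda = -\widetilde{\kappa}$, and therefore $\widetilde{\nabla}_T N = -\widetilde{\kappa}\, T$.

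In fact this computation is carried out verbatim in the paragraph immediately preceding the proposition, so the proof is essentially a one-line reference: the claim is just the conclusion $\lambda = -\widetilde{\kappa}$ repackaged as a statement. I would phrase the proof as ``This follows directly from the computation preceding the statement,'' or reproduce the three displayed equations for self-containedness.

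There is no real obstacle here; the only thing to be careful about is making sure the order-$2$ definition is invoked correctly, namely that ``$\widetilde{\nabla}_T N$ is parallel to $T$'' is interpreted as $\widetilde{\nabla}_T N = \lambda T$ with $\lambda$ a smooth function on $I$ (possibly vanishing somewhere), and that the metric-connection property is what legitimizes differentiating the inner product $g(T,N)$ term by term. Since $\widetilde{\nabla}$ is metric by hypothesis, $Z\,g(X,Y) = g(\widetilde{\nabla}_Z X, Y) + g(X, \widetilde{\nabla}_Z Y)$ holds for all vector fields, and applying this with $Z = X = T$, $Y = N$ is all that is needed. No curvature tensors, no coordinate computations, and no appeal to the specific form \eqref{ssmc} of the connection are required — the argument is purely formal and identical in spirit to the classical Frenet computation for the Levi-Civita connection.
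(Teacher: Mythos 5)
Your proposal is correct and coincides with the paper's own argument: the paper likewise differentiates $g(T,N)=0$ with the metric connection, substitutes $\widetilde{\nabla }_{T}T=\widetilde{\kappa }N$ and $\widetilde{\nabla }_{T}N=\lambda T$, and concludes $\lambda =-\widetilde{\kappa }$ in the paragraph directly preceding the proposition. Nothing is missing.
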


\begin{definition}
If $\widetilde{\nabla }_{T}N$ is not parallel to $T$, we call $\gamma $ a 
\textit{semi-symmetric Frenet curve of order }$3$.
\end{definition}

Let $\gamma $ be a semi-symmetric Frenet curve of order $3$. Since $g\left( 
\widetilde{\nabla }_{T}N,N\right) =0,$ $\widetilde{\nabla }_{T}N$ can be
written as%
\begin{equation}
\widetilde{\nabla }_{T}N=g\left( \widetilde{\nabla }_{T}N,T\right) T+g\left( 
\widetilde{\nabla }_{T}N,B\right) B  \label{nablaTN1}
\end{equation}%
for some unit vector field $B,$ which is g-orthogonal to $T$ and $N$. $B$ is
called the \textit{semi-symmetric binormal of} $\gamma $. From $g(T,N)=0$,
we have 
\begin{equation*}
g\left( \widetilde{\nabla }_{T}N,T\right) =-\widetilde{\kappa },
\end{equation*}%
so we can write%
\begin{equation*}
g\left( \widetilde{\nabla }_{T}N,B\right) B=\widetilde{\nabla }_{T}N+%
\widetilde{\kappa }T.
\end{equation*}

\begin{definition}
For a semi-symmetric metric Frenet curve of order $3$, we define 
\begin{equation*}
\widetilde{\tau }=\left\Vert \widetilde{\nabla }_{T}N+\widetilde{\kappa }%
T\right\Vert >0,
\end{equation*}%
which we call \textit{semi-symmetric torsion of }$\gamma $.
\end{definition}

As a result, from (\ref{nablaTN1}), for a semi-symmetric metric Frenet curve
of order $3,$ 
\begin{equation*}
\widetilde{\nabla }_{T}N=-\widetilde{\kappa }T+\widetilde{\tau }B.
\end{equation*}

\begin{proposition}
For a semi-symmetric metric Frenet curve of order $3$, 
\begin{equation*}
\widetilde{\nabla }_{T}B=-\widetilde{\tau }N.
\end{equation*}
\end{proposition}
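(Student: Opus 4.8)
The plan is to expand $\widetilde{\nabla}_{T}B$ in the $g$-orthonormal frame $\left\{ T,N,B\right\} $ and to pin down each of the three coefficients by differentiating the orthonormality relations $g(B,B)=1$, $g(T,B)=0$ and $g(N,B)=0$ along $\gamma $, using that $\widetilde{\nabla }$ is a metric connection. The only structural inputs needed are the two Frenet relations already established for an order-$3$ curve, namely $\widetilde{\nabla }_{T}T=\widetilde{\kappa }N$ (from the definition of $N$) and $\widetilde{\nabla }_{T}N=-\widetilde{\kappa }T+\widetilde{\tau }B$, together with the fact, guaranteed by the construction preceding the definition of $\widetilde{\tau }$, that $B$ is a unit vector field $g$-orthogonal to both $T$ and $N$; hence $\left\{ T,N,B\right\} $ really is a $g$-orthonormal frame along $\gamma $ and the expansion
\[
\widetilde{\nabla }_{T}B=g\left( \widetilde{\nabla }_{T}B,T\right) T+g\left( \widetilde{\nabla }_{T}B,N\right) N+g\left( \widetilde{\nabla }_{T}B,B\right) B
\]
is legitimate.

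Next I would evaluate the three coefficients in turn. Differentiating $g(B,B)=1$ with respect to $\widetilde{\nabla }_{T}$ gives $2g(\widetilde{\nabla }_{T}B,B)=0$, so the $B$-component vanishes. Differentiating $g(T,B)=0$ gives $g(\widetilde{\nabla }_{T}T,B)+g(T,\widetilde{\nabla }_{T}B)=0$, and since $g(\widetilde{\nabla }_{T}T,B)=\widetilde{\kappa }\,g(N,B)=0$, the $T$-component vanishes as well. Finally, differentiating $g(N,B)=0$ gives $g(\widetilde{\nabla }_{T}N,B)+g(N,\widetilde{\nabla }_{T}B)=0$; substituting $\widetilde{\nabla }_{T}N=-\widetilde{\kappa }T+\widetilde{\tau }B$ and using $g(T,B)=0$, $g(B,B)=1$ yields $g(N,\widetilde{\nabla }_{T}B)=-\widetilde{\tau }$. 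Collecting the three coefficients in the displayed expansion gives $\widetilde{\nabla }_{T}B=-\widetilde{\tau }N$.

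I do not expect a genuine obstacle. The single point that deserves care is that the Leibniz rule $T\,g(X,Y)=g(\widetilde{\nabla }_{T}X,Y)+g(X,\widetilde{\nabla }_{T}Y)$ is exactly the content of the metric-compatibility condition $(\widetilde{\nabla }_{Z}g)(X,Y)=0$ and is \emph{unaffected} by the presence of torsion; this is precisely what was already invoked to obtain $\widetilde{\nabla }_{T}T\perp T$ and $\widetilde{\nabla }_{T}N\perp N$ in the discussion above, so the argument is self-consistent with the framework set up in this section.
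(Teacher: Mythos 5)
Your proposal is correct and follows essentially the same route as the paper: expand $\widetilde{\nabla }_{T}B$ in the $g$-orthonormal frame $\left\{ T,N,B\right\} $ and determine the three coefficients by differentiating $g(T,B)=0$, $g(N,B)=0$ and $g(B,B)=1$ with the metric connection, using $\widetilde{\nabla }_{T}T=\widetilde{\kappa }N$ and $\widetilde{\nabla }_{T}N=-\widetilde{\kappa }T+\widetilde{\tau }B$. Your closing remark that metric compatibility (and hence the Leibniz rule) holds despite the torsion is a worthwhile point the paper leaves implicit, but the argument itself coincides with the paper's proof.
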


\begin{proof}
Since $M$ is $3$-dimensional, we $\widetilde{\nabla }_{T}B\in span\left\{
T,N,B\right\} $, which is a $g$-orthonormal vector basis along the curve $%
\gamma $ by construction. So we can write%
\begin{equation}
\widetilde{\nabla }_{T}B=g\left( \widetilde{\nabla }_{T}B,T\right) T+g\left( 
\widetilde{\nabla }_{T}B,N\right) N+g\left( \widetilde{\nabla }%
_{T}B,B\right) B.  \label{0}
\end{equation}%
Differentiating $g\left( T,B\right) =0,$%
\begin{equation*}
g\left( \widetilde{\nabla }_{T}T,B\right) +g\left( T,\widetilde{\nabla }%
_{T}B\right) =0,
\end{equation*}%
\begin{equation*}
g\left( \widetilde{\kappa }N,B\right) +g\left( T,\widetilde{\nabla }%
_{T}B\right) =0,
\end{equation*}%
\begin{equation}
g\left( T,\widetilde{\nabla }_{T}B\right) =0.  \label{1}
\end{equation}%
Differentiating $g\left( N,B\right) =0,$%
\begin{equation*}
g\left( \widetilde{\nabla }_{T}N,B\right) +g\left( N,\widetilde{\nabla }%
_{T}B\right) =0,
\end{equation*}%
\begin{equation*}
g\left( -\widetilde{\kappa }T+\widetilde{\tau }B,B\right) +g\left( N,%
\widetilde{\nabla }_{T}B\right) =0,
\end{equation*}%
\begin{equation}
g\left( N,\widetilde{\nabla }_{T}B\right) =-\widetilde{\tau }.  \label{2}
\end{equation}%
Differentiating $g\left( B,B\right) =1,$%
\begin{equation}
g\left( \widetilde{\nabla }_{T}B,B\right) =0.  \label{3}
\end{equation}%
If we write (\ref{1}),(\ref{2}) and (\ref{3}) in (\ref{0}), we get $%
\widetilde{\nabla }_{T}B=-\widetilde{\tau }N.$
\end{proof}

Now,

\begin{definition}
We call $\left\{ T,N,B\right\} $ the \textit{semi-symmetric Frenet frame
field of} $\gamma $ if its order is $3$. For order $2$, $\left\{ T,N\right\} 
$ will be called the s\textit{emi-symmetric Frenet frame field of} $\gamma .$
Semi-symmetric Frenet curves of order $1$ (semi-symmetric geodesics) have
the s\textit{emi-symmetric metric Frenet frame field }$\left\{ T\right\} .$
\end{definition}

\begin{definition}
A semi-symmetric Frenet curve of order $1$ is a \textit{semi-symmetric
geodesic}. A semi-symmetric Frenet curve of order $2$ with constant $%
\widetilde{\kappa }>0$ is a \textit{semi-symmetric circle}. A semi-symmetric
Frenet curve of order $3$ with constant $\widetilde{\kappa }>0$ and constant 
$\widetilde{\tau }>0$ is a \textit{semi-symmetric helix}.
\end{definition}

To sum up our construction, we call the next formulas as \textit{%
semi-symmetric Frenet formulas}:%
\begin{equation*}
\widetilde{\nabla }_{T}T=-\widetilde{\kappa }N,
\end{equation*}%
\begin{equation*}
\widetilde{\nabla }_{T}N=-\widetilde{\kappa }T+\widetilde{\tau }B,
\end{equation*}%
\begin{equation*}
\widetilde{\nabla }_{T}B=-\widetilde{\tau }N,
\end{equation*}%
where $\widetilde{\kappa }=\left\Vert \widetilde{\nabla }_{T}T\right\Vert $
is the semi-symmetric curvature and $\widetilde{\tau }=\left\Vert \widetilde{%
\nabla }_{T}N+\widetilde{\kappa }T\right\Vert $ is the semi-symmetric
torsion of $\gamma .$

\begin{remark}
Notice that semi-symmetric normal $N$ and semi-symmetric binormal $B$ do not
have to coincide with the ones of the Levi-Civita connection. But $T=\gamma
^{\prime }$ is the same. Also notice that for order $3$, $\left\{
T,N,B\right\} $ is fundamentally constructed using Gram-Schmidt process of 
\begin{equation*}
\left\{ T,\widetilde{\nabla }_{T}T,\widetilde{\nabla }_{T}\widetilde{\nabla }%
_{T}T\right\} ,
\end{equation*}%
which can also be calculated after construction as follows:

\begin{proposition}
For a semi-symmetric Frenet curve of order $3$,%
\begin{equation*}
\widetilde{\nabla }_{T}\widetilde{\nabla }_{T}T=-\widetilde{\kappa }^{2}T+%
\widetilde{\kappa }^{\prime }N+\widetilde{\kappa }\widetilde{\tau }B.
\end{equation*}%
For a semi-symmetric Frenet curve of order $2$,%
\begin{equation*}
\widetilde{\nabla }_{T}\widetilde{\nabla }_{T}T=-\widetilde{\kappa }^{2}T+%
\widetilde{\kappa }^{\prime }N.
\end{equation*}%
For a semi-symmetric Frenet curve of order $1$, $\widetilde{\nabla }_{T}%
\widetilde{\nabla }_{T}T=0.$
\end{proposition}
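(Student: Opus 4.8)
The plan is to obtain all three identities by differentiating the first semi-symmetric Frenet formula a second time along $\gamma$ and then substituting the second one. Since $\widetilde{\nabla}$ is a metric connection it obeys the Leibniz rule $\widetilde{\nabla}_{T}(fX)=(Tf)X+f\,\widetilde{\nabla}_{T}X$ for every smooth function $f$ and every vector field $X$ along $\gamma$; writing $f^{\prime}=Tf$ for the derivative along the curve, this is essentially the only tool needed beyond the Frenet formulas themselves.

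For a semi-symmetric Frenet curve of order $3$, I would start from $\widetilde{\nabla}_{T}T=\widetilde{\kappa}N$ (the defining relation of the semi-symmetric normal $N$, i.e.\ the first Frenet formula), apply $\widetilde{\nabla}_{T}$, and use the Leibniz rule to get
\begin{equation*}
\widetilde{\nabla}_{T}\widetilde{\nabla}_{T}T=\widetilde{\kappa}^{\prime}N+\widetilde{\kappa}\,\widetilde{\nabla}_{T}N.
\end{equation*}
Substituting the second Frenet formula $\widetilde{\nabla}_{T}N=-\widetilde{\kappa}T+\widetilde{\tau}B$ and collecting the components along the $g$-orthonormal frame $\{T,N,B\}$ yields exactly $\widetilde{\nabla}_{T}\widetilde{\nabla}_{T}T=-\widetilde{\kappa}^{2}T+\widetilde{\kappa}^{\prime}N+\widetilde{\kappa}\widetilde{\tau}B$. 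The order-$2$ case is the same computation with $\widetilde{\nabla}_{T}N=-\widetilde{\kappa}T$ supplied by Proposition 1, so the $B$-term drops out and one is left with $-\widetilde{\kappa}^{2}T+\widetilde{\kappa}^{\prime}N$. The order-$1$ case is immediate, since there $\widetilde{\kappa}=0$ forces $\widetilde{\nabla}_{T}T=0$ and hence $\widetilde{\nabla}_{T}\widetilde{\nabla}_{T}T=\widetilde{\nabla}_{T}0=0$.

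I do not expect a genuine obstacle: this is a short, direct computation, and the result is really just a restatement of the Frenet formulas after one more covariant differentiation. The only points that need care are bookkeeping ones: one must use the form of the first Frenet formula that is consistent with the definition $N=\widetilde{\nabla}_{T}T/\widetilde{\kappa}$, so that the coefficient of $T$ comes out as $-\widetilde{\kappa}^{2}$, and one must keep in mind that $\widetilde{\tau}$ and $B$ exist only in the order-$3$ situation, with Proposition 1 furnishing the correct replacement $\widetilde{\nabla}_{T}N=-\widetilde{\kappa}T$ at order $2$. The smoothness of $\widetilde{\kappa}$ that makes $\widetilde{\kappa}^{\prime}$ meaningful is automatic on the relevant domain, where $\widetilde{\kappa}=\left\Vert\widetilde{\nabla}_{T}T\right\Vert>0$ for a curve of order at least $2$.
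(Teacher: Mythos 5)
Your computation is correct and is exactly the argument the paper intends: the statement appears in the remark without proof, and differentiating $\widetilde{\nabla}_{T}T=\widetilde{\kappa}N$ once more, then substituting $\widetilde{\nabla}_{T}N=-\widetilde{\kappa}T+\widetilde{\tau}B$ (or $-\widetilde{\kappa}T$ at order $2$, or $\widetilde{\nabla}_{T}T=0$ at order $1$), is the only step needed. You were also right to insist on the sign convention $\widetilde{\nabla}_{T}T=+\widetilde{\kappa}N$ coming from the definition $N=\widetilde{\nabla}_{T}T/\widetilde{\kappa}$: the paper's summary list writes $\widetilde{\nabla}_{T}T=-\widetilde{\kappa}N$, which is inconsistent with that definition and with the stated proposition, so your choice is the one under which the claimed coefficients $-\widetilde{\kappa}^{2}$, $\widetilde{\kappa}^{\prime}$, $\widetilde{\kappa}\widetilde{\tau}$ actually come out.
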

\end{remark}

\section{\label{sect-geodesicE3}Semi-Symmetric Geodesics of $\mathbb{E}^{3}$}

Let us consider the Euclidean space $\mathbb{E}^{3}$ endowed with the
semi-symmetric connection $\widetilde{\nabla }$, where we choose 
\begin{equation*}
U=\frac{\partial }{\partial z},\text{ }\omega \left( X\right) =g\left(
X,U\right) ,
\end{equation*}%
where $\left\{ x,y,z\right\} $ are coordinate functions, $%
g=dx^{2}+dy^{2}+dz^{2}$ is the Euclidean metric. It is well-known that 
\begin{equation*}
e_{1}=\frac{\partial }{\partial x},\text{ }e_{2}=\frac{\partial }{\partial y}%
,\text{ }e_{3}=\frac{\partial }{\partial z}
\end{equation*}%
form a g-orthonormal basis of $\chi \left( \mathbb{E}^{3}\right) $.

Let $\gamma :I\rightarrow \mathbb{E}^{3},$ $\gamma (s)=\left( \gamma
_{1}(s),\gamma _{2}(s),\gamma _{3}(s)\right) $ be a semi-symmetric geodesic
of $\mathbb{E}^{3}$, where $s$ denotes the arc-length parameter. Then we
find 
\begin{equation}
0=\widetilde{\nabla }_{T}T=\nabla _{T}T+g(T,U)T-g(T,T)U,  \label{ssmce3}
\end{equation}%
where $\nabla $ is the Levi-Civita connection and $T=\frac{d}{ds}=\gamma
^{\prime }=\left( \gamma _{1}^{\prime },\gamma _{2}^{\prime },\gamma
_{3}^{\prime }\right) $. In $\mathbb{E}^{3},$ it is well-known that 
\begin{equation*}
\nabla _{T}T=\gamma ^{\prime \prime }=\left( \gamma _{1}^{\prime \prime
},\gamma _{2}^{\prime \prime },\gamma _{3}^{\prime \prime }\right) .
\end{equation*}%
We also get 
\begin{equation*}
g(T,U)=\gamma _{3}^{\prime }
\end{equation*}%
and%
\begin{equation}
g(T,T)=\left( \gamma _{1}^{\prime }\right) ^{2}+\left( \gamma _{2}^{\prime
}\right) ^{2}+\left( \gamma _{3}^{\prime }\right) ^{2}=1.  \label{unit-E3}
\end{equation}%
From (\ref{ssmce3}), we can write%
\begin{equation*}
\left( \gamma _{1}^{\prime \prime },\gamma _{2}^{\prime \prime },\gamma
_{3}^{\prime \prime }\right) +\gamma _{3}^{\prime }.\left( \gamma
_{1}^{\prime },\gamma _{2}^{\prime },\gamma _{3}^{\prime }\right) -\left(
0,0,1\right) =0,
\end{equation*}%
which gives us the following system of differential equations: 
\begin{equation*}
\gamma _{1}^{\prime \prime }+\gamma _{1}^{\prime }\gamma _{3}^{\prime }=0,
\end{equation*}%
\begin{equation*}
\gamma _{2}^{\prime \prime }+\gamma _{2}^{\prime }\gamma _{3}^{\prime }=0,
\end{equation*}%
\begin{equation*}
\gamma _{3}^{\prime \prime }+\left( \gamma _{3}^{\prime }\right) ^{2}-1=0.
\end{equation*}%
If we solve this system, we can state the following result:

\begin{theorem}
Let $\gamma :I\rightarrow \mathbb{E}^{3},$ $\gamma (s)=\left( \gamma
_{1}(s),\gamma _{2}(s),\gamma _{3}(s)\right) $ be a semi-symmetric Frenet
curve of $\mathbb{E}^{3}$ endowed with the semi-symmetric connection $%
\widetilde{\nabla }$ with unit vector field $U=\frac{\partial }{\partial z}$%
, where $s$ denotes the arc-length parameter. Then $\gamma $ is a
semi-symmetric geodesic if and only if%
\begin{equation*}
\gamma _{1}(s)=e^{-c_{1}}.c_{2}.\arctan \left( e^{s-c_{1}}\right) +c_{3},
\end{equation*}%
\begin{equation*}
\gamma _{2}(s)=e^{-c_{1}}.c_{4}.\arctan \left( e^{s-c_{1}}\right) +c_{5},
\end{equation*}%
\begin{equation*}
\gamma _{3}(s)=-s+\ln \left( e^{2s}+e^{2c_{1}}\right) +c_{6},
\end{equation*}%
where $c_{i}$, $i=\overline{1,6}$ are arbitrary constants satisfying 
\begin{equation*}
4e^{2c_{1}}=c_{2}^{2}+c_{4}^{2}.
\end{equation*}
\end{theorem}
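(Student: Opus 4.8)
The plan is to integrate, from the bottom up, the triangular system
\[
\gamma_1''+\gamma_1'\gamma_3'=0,\qquad \gamma_2''+\gamma_2'\gamma_3'=0,\qquad \gamma_3''+(\gamma_3')^2-1=0
\]
obtained above, exploiting that the third equation does not involve $\gamma_1$ and $\gamma_2$. Setting $u=\gamma_3'$, that equation is the separable equation $u'=1-u^2$; since the unit-speed relation (\ref{unit-E3}) forces $(\gamma_3')^2\le 1$, separation of variables gives $\gamma_3'=\tanh(s-c_1)$ for a constant $c_1$ (the branch $|u|>1$ would make $(\gamma_1')^2+(\gamma_2')^2<0$ in the final step, and the constant solutions $u\equiv\pm1$ yield only degenerate straight lines). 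Integrating once more, $\gamma_3=\ln\cosh(s-c_1)+\mathrm{const}$, and the identity $\cosh(s-c_1)=\tfrac12 e^{-s-c_1}(e^{2s}+e^{2c_1})$ rewrites this, after absorbing the additive constant into $c_6$, as $\gamma_3(s)=-s+\ln(e^{2s}+e^{2c_1})+c_6$.

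Next I would feed $\gamma_3'$ into the first two equations. Each is first-order linear and homogeneous in $\gamma_1'$ (respectively $\gamma_2'$): from $(\gamma_1')'+\gamma_3'\gamma_1'=0$ one obtains $\gamma_1'=Ae^{-\gamma_3}$, and since $e^{-\gamma_3}=e^{-c_6}\,e^{s}/(e^{2s}+e^{2c_1})$, the substitution $t=e^{s}$ reduces the remaining quadrature to $\int \frac{dt}{t^2+e^{2c_1}}=e^{-c_1}\arctan(e^{s-c_1})$. This gives $\gamma_1(s)=e^{-c_1}c_2\arctan(e^{s-c_1})+c_3$ with $c_2=Ae^{-c_6}$, and the same computation for $\gamma_2$ yields the stated formula with constants $c_4,c_5$.

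It remains to determine the relation among the constants by imposing (\ref{unit-E3}). With $\gamma_1'=c_2\,e^{s}/(e^{2s}+e^{2c_1})$, $\gamma_2'=c_4\,e^{s}/(e^{2s}+e^{2c_1})$ and $\gamma_3'=(e^{2s}-e^{2c_1})/(e^{2s}+e^{2c_1})$, the equality $(\gamma_1')^2+(\gamma_2')^2+(\gamma_3')^2=1$ becomes
\[
(c_2^2+c_4^2)e^{2s}+(e^{2s}-e^{2c_1})^2=(e^{2s}+e^{2c_1})^2,
\]
and since $(e^{2s}+e^{2c_1})^2-(e^{2s}-e^{2c_1})^2=4e^{2s}e^{2c_1}$, this reduces to $c_2^2+c_4^2=4e^{2c_1}$, the asserted constraint. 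Conversely, differentiating the three displayed formulas shows directly that they solve the system, and reading the last computation backwards shows that the constraint is exactly what makes $\gamma$ unit speed; hence any curve of the stated form is a semi-symmetric geodesic, which settles both implications.

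The argument is essentially a routine integration of a triangular ODE system; the only steps needing care are selecting the correct branch of the solution of $u'=1-u^2$ and showing the others are irrelevant (handled through (\ref{unit-E3})), and the algebraic simplification that puts $\ln\cosh(s-c_1)$ and the quadratures for $\gamma_1,\gamma_2$ into the precise form stated. I expect the branch selection to be the main, albeit minor, obstacle.
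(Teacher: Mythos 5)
Your proposal is correct and follows essentially the same route as the paper: the paper derives the system $\gamma_1''+\gamma_1'\gamma_3'=0$, $\gamma_2''+\gamma_2'\gamma_3'=0$, $\gamma_3''+(\gamma_3')^2-1=0$ and then simply asserts the solution, while you carry out the integration explicitly (separable equation for $\gamma_3'$ giving $\tanh(s-c_1)$, first-order linear equations giving $\gamma_1'=A e^{-\gamma_3}$, and the unit-speed condition yielding $4e^{2c_1}=c_2^2+c_4^2$). The only point glossed over --- by you and by the paper alike --- is that the constant solutions $\gamma_3'\equiv\pm 1$ (vertical lines) are in fact semi-symmetric geodesics not captured by the stated formulas, so dismissing them as ``degenerate'' deserves at least a remark.
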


\section{\label{sect-geodesic-R3(-3)}Semi-symmetric Geodesics of $\mathbb{R}%
^{3}(-3)$}

Let $M=\mathbb{R}^{3}$ with coordinate functions $\left\{ x,y,z\right\} $.
Let us define%
\begin{equation*}
\xi =2\frac{\partial }{\partial z},\text{ }\eta =\frac{1}{2}\left(
dz-ydx\right) ,
\end{equation*}%
\begin{equation*}
X=2\frac{\partial }{\partial y},\text{ }Y=2\left( \frac{\partial }{\partial x%
}+y\frac{\partial }{\partial z}\right) ,
\end{equation*}%
\begin{equation*}
\varphi X=Y,\text{ }\varphi \xi =0,
\end{equation*}%
\begin{equation*}
g=\frac{1}{4}\left( dx^{2}+dy^{2}\right) +\eta \otimes \eta .
\end{equation*}%
$\left( M,\varphi ,\xi ,\eta ,g\right) $ is a Sasakian manifold known as $%
\mathbb{R}^{3}(-3)$. $\left\{ X,Y,\xi \right\} $ is a $g$-orthonormal basis
of $\chi \left( M\right) $ and the Levi-Civita connection is calculated as%
\begin{equation*}
\nabla _{X}X=0,\text{ }\nabla _{X}Y=\xi ,\nabla _{X}\xi =-Y,
\end{equation*}%
\begin{equation*}
\nabla _{Y}X=-\xi ,\text{ }\nabla _{Y}Y=0,\nabla _{Y}\xi =X,
\end{equation*}%
\begin{equation*}
\nabla _{\xi }X=-Y,\text{ }\nabla _{\xi }Y=X,\nabla _{\xi }\xi =0.
\end{equation*}%
If we choose $U=\xi $ and denote the semi-symmetric metric connection by $%
\widetilde{\nabla }$, we find%
\begin{equation*}
\widetilde{\nabla }_{X}X=-\xi ,\text{ }\widetilde{\nabla }_{X}Y=\xi ,%
\widetilde{\nabla }_{X}\xi =X-Y,
\end{equation*}%
\begin{equation*}
\widetilde{\nabla }_{Y}X=-\xi ,\text{ }\widetilde{\nabla }_{Y}Y=-\xi ,%
\widetilde{\nabla }_{Y}\xi =X+Y,
\end{equation*}%
\begin{equation*}
\widetilde{\nabla }_{\xi }X=-Y,\text{ }\widetilde{\nabla }_{\xi }Y=X,%
\widetilde{\nabla }_{\xi }\xi =0.
\end{equation*}%
Let $\gamma :I\rightarrow \mathbb{R}^{3}(-3),$ $\gamma =\left( \gamma
_{1},\gamma _{2},\gamma _{3}\right) $ be a semi-symmetric geodesic. Then%
\begin{eqnarray*}
T &=&\left( \gamma _{1}^{\prime },\gamma _{2}^{\prime },\gamma _{3}^{\prime
}\right) \\
&=&\frac{1}{2}\left[ \gamma _{2}^{\prime }X+\gamma _{1}^{\prime }Y+\left(
\gamma _{3}^{\prime }-\gamma _{1}^{\prime }\gamma _{2}\right) \xi \right] .
\end{eqnarray*}%
Since $\gamma $ is unit-speed, we have%
\begin{equation}
\left( \gamma _{1}^{\prime }\right) ^{2}+\left( \gamma _{2}^{\prime }\right)
^{2}+\left( \gamma _{3}^{\prime }-\gamma _{1}^{\prime }\gamma _{2}\right)
^{2}=4.  \label{unitR3}
\end{equation}%
Using $\widetilde{\nabla }$, we obtain%
\begin{equation}
\widetilde{\nabla }_{T}X=\frac{1}{2}\left[ -\left( \gamma _{1}^{\prime
}+\gamma _{2}^{\prime }\right) \xi -\left( \gamma _{3}^{\prime }-\gamma
_{1}^{\prime }\gamma _{2}\right) Y\right] ,  \label{*}
\end{equation}%
\begin{equation}
\widetilde{\nabla }_{T}Y=\frac{1}{2}\left[ \left( \gamma _{2}^{\prime
}-\gamma _{1}^{\prime }\right) \xi +\left( \gamma _{3}^{\prime }-\gamma
_{1}^{\prime }\gamma _{2}\right) X\right] ,  \label{**}
\end{equation}%
\begin{equation}
\widetilde{\nabla }_{T}\xi =\frac{1}{2}\left[ \left( \gamma _{1}^{\prime
}+\gamma _{2}^{\prime }\right) X+\left( \gamma _{2}^{\prime }-\gamma
_{1}^{\prime }\right) Y\right] .  \label{***}
\end{equation}%
As a result, we get%
\begin{equation}
\widetilde{\nabla }_{T}T=\frac{1}{2}\left[ \lambda X+\mu Y+\nu \xi \right]
=0,  \label{geodesicR3}
\end{equation}%
where%
\begin{equation*}
\lambda =\gamma _{2}^{\prime \prime }-\frac{1}{2}\gamma _{2}\gamma
_{1}^{\prime }\gamma _{2}^{\prime }-\gamma _{2}\left( \gamma _{1}^{\prime
}\right) ^{2}+\gamma _{1}^{\prime }\gamma _{3}^{\prime }+\frac{1}{2}\gamma
_{2}^{\prime }\gamma _{3}^{\prime },
\end{equation*}%
\begin{equation*}
\mu =\gamma _{1}^{\prime \prime }+\gamma _{2}\gamma _{1}^{\prime }\gamma
_{2}^{\prime }-\frac{1}{2}\gamma _{2}\left( \gamma _{1}^{\prime }\right)
^{2}+\frac{1}{2}\gamma _{1}^{\prime }\gamma _{3}^{\prime }-\gamma
_{2}^{\prime }\gamma _{3}^{\prime },
\end{equation*}%
\begin{equation*}
\nu =-\frac{1}{2}\left( \gamma _{1}^{\prime }\right) ^{2}-\frac{1}{2}\left(
\gamma _{1}^{\prime }\right) ^{2}+\gamma _{3}^{\prime \prime }-\gamma
_{1}^{\prime \prime }\gamma _{2}-\gamma _{1}^{\prime }\gamma _{2}^{\prime }.
\end{equation*}%
Now we can state the following theorem:

\begin{theorem}
Let $\gamma :I\rightarrow \mathbb{R}^{3}(-3),$ $\gamma (s)=\left( \gamma
_{1}(s),\gamma _{2}(s),\gamma _{3}(s)\right) $ be a semi-symmetric Frenet
curve of $\mathbb{R}^{3}(-3)$ endowed with the semi-symmetric connection $%
\widetilde{\nabla }$ with unit vector field $U=\xi $, where $s$ denotes the
arc-length parameter. Then $\gamma $ is a semi-symmetric geodesic if and
only if it satisfies the system of non-linear differential equations $%
\lambda =0,$ $\mu =0,\nu =0,$ and 
\begin{equation*}
\left( \gamma _{1}^{\prime }\right) ^{2}+\left( \gamma _{2}^{\prime }\right)
^{2}+\left( \gamma _{3}^{\prime }-\gamma _{1}^{\prime }\gamma _{2}\right)
^{2}=4.
\end{equation*}
\end{theorem}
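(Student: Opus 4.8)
The plan is to obtain the statement immediately from the frame computation carried out just before it. By definition, $\gamma$ is a semi-symmetric geodesic precisely when $\widetilde{\nabla}_{T}T=0$, and the curve carries the arc-length parametrization used throughout exactly when $g(T,T)=1$, which in components is equation (\ref{unitR3}). Hence the theorem reduces to showing that, for such a curve, $\widetilde{\nabla}_{T}T=0$ holds if and only if the three scalar equations $\lambda=\mu=\nu=0$ hold.

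For the main step I would recall that $T=\frac{1}{2}\big(\gamma_{2}^{\prime}X+\gamma_{1}^{\prime}Y+(\gamma_{3}^{\prime}-\gamma_{1}^{\prime}\gamma_{2})\xi\big)$, differentiate covariantly with $\widetilde{\nabla}_{T}$ using the Leibniz rule together with the formulas (\ref{*}), (\ref{**}), (\ref{***}) for $\widetilde{\nabla}_{T}X$, $\widetilde{\nabla}_{T}Y$, $\widetilde{\nabla}_{T}\xi$, and collect terms; this yields exactly (\ref{geodesicR3}), i.e. $\widetilde{\nabla}_{T}T=\frac{1}{2}(\lambda X+\mu Y+\nu\xi)$ with $\lambda,\mu,\nu$ the displayed expressions. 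Since $\left\{X,Y,\xi\right\}$ is a $g$-orthonormal basis of $\chi(M)$ along $\gamma$, a vector field expressed in this basis vanishes if and only if each of its three coefficients vanishes identically; therefore $\widetilde{\nabla}_{T}T=0$ is equivalent to $\lambda=0$, $\mu=0$, $\nu=0$. Adjoining the unit-speed constraint (\ref{unitR3}) — which is an independent normalization rather than a consequence of $\lambda=\mu=\nu=0$, since metricity only forces $g(T,T)$ to be constant — gives both directions of the equivalence at once.

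The computation is entirely routine once the connection table for $\widetilde{\nabla}$ is in hand, so the only place an error could enter is the bookkeeping that produces the precise forms of $\lambda$, $\mu$, and $\nu$; as those are already recorded in the text, I would cite them directly and let the proof rest on the orthonormal-frame observation. No genuine obstacle is expected here.
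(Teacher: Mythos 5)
Your proposal is correct and follows essentially the same route as the paper: the theorem there is stated as the immediate consequence of expanding $\widetilde{\nabla}_{T}T$ in the $g$-orthonormal frame $\{X,Y,\xi\}$ via (\ref{*}), (\ref{**}), (\ref{***}) to obtain (\ref{geodesicR3}), and then reading off that $\widetilde{\nabla}_{T}T=0$ is equivalent to $\lambda=\mu=\nu=0$ together with the arc-length condition (\ref{unitR3}). Nothing is missing; your remark that the unit-speed equation is an independent normalization rather than a consequence of the three component equations is consistent with how the paper treats it.
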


\begin{corollary}
\textbf{\ }If $\gamma :I\rightarrow \mathbb{R}^{3}(-3)$ is a semi-symmetric
geodesic, then%
\begin{equation*}
\gamma _{3}^{\prime }-\gamma _{1}^{\prime }\gamma _{2}=\frac{2\left(
e^{2s}-e^{4c_{1}}\right) }{e^{2s}+e^{4c_{1}}},
\end{equation*}%
\begin{equation*}
\left( \gamma _{1}^{\prime }\right) ^{2}+\left( \gamma _{2}^{\prime }\right)
^{2}=\frac{16\left( e^{2s+4c_{1}}\right) }{e^{2s}+e^{4c_{1}}},
\end{equation*}%
where $s$ is the arc-length parameter and $c_{1}$ is a constant.
\end{corollary}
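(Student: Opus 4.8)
The essential observation is that the whole nonlinear system collapses to a single autonomous first-order ODE once one introduces the scalar function
\[
f := \gamma_3' - \gamma_1'\gamma_2 ,
\]
which is, up to the factor $\tfrac12$, the $\xi$-component of $T$ in the orthonormal frame $\{X,Y,\xi\}$ of (\ref{unitR3}). In this notation the unit-speed condition (\ref{unitR3}) reads $(\gamma_1')^2+(\gamma_2')^2+f^2=4$, so it suffices to determine $f$ as a function of $s$: the second asserted identity is then nothing but $(\gamma_1')^2+(\gamma_2')^2 = 4-f^2$.

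First I would differentiate, noting $f' = \gamma_3''-\gamma_1''\gamma_2-\gamma_1'\gamma_2'$, and substitute this into the vanishing of the $\xi$-component in (\ref{geodesicR3}), i.e. into $\nu=0$ from the preceding theorem; after the substitution this equation reduces to
\[
f' = \tfrac12\bigl[(\gamma_1')^2+(\gamma_2')^2\bigr].
\]
Using the unit-speed relation to eliminate $(\gamma_1')^2+(\gamma_2')^2$ then turns it into the autonomous, separable equation
\[
f' = 2 - \tfrac12 f^2 ,
\]
in which none of the $\gamma_i$ appear any longer.

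Next I would integrate this equation. On the open region where $|f|<2$ — which by the unit-speed relation is exactly the complement of the degenerate locus $\gamma_1'=\gamma_2'=0$ (where $T=\pm\xi$) — partial fractions give
\[
\frac{2\,df}{(2-f)(2+f)} = ds ,\qquad \tfrac12\ln\left|\frac{2+f}{2-f}\right| = s + C .
\]
Exponentiating, solving the resulting linear relation for $f$, and renaming the integration constant through $C=-2c_1$ produce
\[
f = \frac{2\bigl(e^{2s}-e^{4c_1}\bigr)}{e^{2s}+e^{4c_1}} ,
\]
which is the first identity; substituting it into $(\gamma_1')^2+(\gamma_2')^2 = 4-f^2$ gives the second.

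The computation is routine; the only points requiring a little care are the sign bookkeeping when the absolute value is removed and the normalization of the integration constant — every sign choice is absorbed into the single real parameter $c_1$ — together with the remark that the excluded constant solutions $f\equiv\pm 2$ arise as the limits $c_1\to\pm\infty$, so nothing is lost. Note that the remaining geodesic equations $\lambda=0$ and $\mu=0$ are not needed: they constrain the individual functions $\gamma_i$, whereas the two quantities in the statement are already pinned down by $\nu=0$ and the unit-speed condition alone.
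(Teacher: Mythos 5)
Your proposal is correct and follows essentially the same route as the paper: the paper's proof likewise sets $f=\gamma_3'-\gamma_1'\gamma_2$, combines $\nu=0$ with the unit-speed relation (\ref{unitR3}) to obtain $2f'=4-f^2$, and then integrates (the paper simply says ``which gives the result'' where you carry out the separation of variables explicitly). Note that your computation $(\gamma_1')^2+(\gamma_2')^2=4-f^2=\frac{16\,e^{2s+4c_1}}{\left(e^{2s}+e^{4c_1}\right)^{2}}$ shows the denominator in the corollary's second formula should be squared, a typo in the stated result rather than an error in your argument.
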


\begin{proof}
From (\ref{unitR3}) and $\nu =0$, we find%
\begin{equation*}
2\left( \gamma _{3}^{\prime }-\gamma _{1}^{\prime }\gamma _{2}\right)
^{\prime }=\left( \gamma _{1}^{\prime }\right) ^{2}+\left( \gamma
_{2}^{\prime }\right) ^{2}=4-\left( \gamma _{3}^{\prime }-\gamma
_{1}^{\prime }\gamma _{2}\right) ^{2}.
\end{equation*}%
If we denote $f(s)=\gamma _{3}^{\prime }-\gamma _{1}^{\prime }\gamma _{2},$
we have Riccati's differential equation%
\begin{equation*}
2f^{\prime }=4-f^{2},
\end{equation*}%
which gives the result.
\end{proof}

\section{\label{sect-geodesic-H3(-1)}Semi-symmetric Geodesics of $\mathbb{H}%
^{3}(-1)$}

Let us recall the Poincare model of the hyperbolic space. Let 
\begin{equation*}
M=\left\{ \left( x,y,z\right) \in \mathbb{R}^{3}:z>0\right\} ,
\end{equation*}%
\begin{equation*}
g=\frac{1}{z^{2}}\left( dx^{2}+dy^{2}+dz^{2}\right) .
\end{equation*}%
Then 
\begin{equation*}
e_{1}=z\frac{\partial }{\partial x},\text{ }e_{2}=z\frac{\partial }{\partial
y},\text{ }e_{3}=-z\frac{\partial }{\partial z}
\end{equation*}%
form a $g$-orthonormal basis of $\mathbb{H}^{3}(-1)$. The corresponding
Levi-Civita connection is%
\begin{equation*}
\nabla _{e_{1}}e_{1}=-e_{3},\text{ }\nabla _{e_{1}}e_{2}=0,\text{ }\nabla
_{e_{1}}e_{3}=e_{1},
\end{equation*}%
\begin{equation*}
\nabla _{e_{2}}e_{1}=0,\text{ }\nabla _{e_{2}}e_{2}=-e_{3},\text{ }\nabla
_{e_{2}}e_{3}=e_{2},
\end{equation*}%
\begin{equation*}
\nabla _{e_{3}}e_{1}=0,\text{ }\nabla _{e_{3}}e_{2}=0,\text{ }\nabla
_{e_{3}}e_{3}=0.
\end{equation*}%
If we choose $U=e_{3}$, we calculate the semi-symmetric metric connection $%
\widetilde{\nabla }$ as%
\begin{equation*}
\widetilde{\nabla }_{e_{1}}e_{1}=-2e_{3},\text{ }\widetilde{\nabla }%
_{e_{1}}e_{2}=0,\text{ }\widetilde{\nabla }_{e_{1}}e_{3}=2e_{1},
\end{equation*}%
\begin{equation*}
\widetilde{\nabla }_{e_{2}}e_{1}=0,\text{ }\widetilde{\nabla }%
_{e_{2}}e_{2}=-2e_{3},\text{ }\widetilde{\nabla }_{e_{2}}e_{3}=2e_{2},
\end{equation*}%
\begin{equation*}
\widetilde{\nabla }_{e_{3}}e_{1}=0,\text{ }\widetilde{\nabla }%
_{e_{3}}e_{2}=0,\text{ }\widetilde{\nabla }_{e_{3}}e_{3}=0.
\end{equation*}

Let $\gamma :I\rightarrow H^{3}\left( -1\right) $ be a semi-symmetric
geodesic. Then%
\begin{eqnarray*}
T &=&\gamma ^{\prime }=\left( \gamma _{1}^{\prime },\gamma _{2}^{\prime
},\gamma _{3}^{\prime }\right)  \\
&=&\frac{\gamma _{1}^{\prime }}{\gamma _{3}}e_{1}+\frac{\gamma _{2}^{\prime }%
}{\gamma _{3}}e_{2}-\frac{\gamma _{3}^{\prime }}{\gamma _{3}}e_{3}.
\end{eqnarray*}%
Since $g\left( T,T\right) =1$, we find%
\begin{equation}
\left( \gamma _{1}^{\prime }\right) ^{2}+\left( \gamma _{2}^{\prime }\right)
^{2}+\left( \gamma _{3}^{\prime }\right) ^{2}=\gamma _{3}^{2}.
\label{unitH3}
\end{equation}%
Using $\widetilde{\nabla }$, we obtain%
\begin{equation*}
\widetilde{\nabla }_{T}e_{1}=-\frac{2\gamma _{1}^{\prime }}{\gamma _{3}}%
e_{3},
\end{equation*}%
\begin{equation*}
\widetilde{\nabla }_{T}e_{2}=-\frac{2\gamma _{2}^{\prime }}{\gamma _{3}}%
e_{3},
\end{equation*}%
\begin{equation*}
\widetilde{\nabla }_{T}e_{3}=\frac{2\gamma _{1}^{\prime }}{\gamma _{3}}e_{1}+%
\frac{2\gamma _{2}^{\prime }}{\gamma _{3}}e_{2}.
\end{equation*}%
As a result, we find%
\begin{eqnarray*}
\widetilde{\nabla }_{T}T &=&\frac{1}{\gamma _{3}^{2}}\left[ \left( \gamma
_{1}^{\prime \prime }\gamma _{3}-3\gamma _{1}^{\prime }\gamma _{3}^{\prime
}\right) e_{1}\right.  \\
&&+\left( \gamma _{2}^{\prime \prime }\gamma _{3}-3\gamma _{2}^{\prime
}\gamma _{3}^{\prime }\right) e_{2} \\
&&\left. -\left( \gamma _{3}^{\prime \prime }\gamma _{3}-\left( \gamma
_{3}^{\prime }\right) ^{2}+2\left( \gamma _{1}^{\prime }\right) ^{2}+2\left(
\gamma _{2}^{\prime }\right) ^{2}\right) e_{3}\right]  \\
&=&0.
\end{eqnarray*}%
Now we can state the following theorem:

\begin{theorem}
Let $\gamma :I\rightarrow \mathbb{H}^{3}(-1),$ $\gamma (s)=\left( \gamma
_{1}(s),\gamma _{2}(s),\gamma _{3}(s)\right) $ be a semi-symmetric Frenet
curve of $\mathbb{H}^{3}(-1)$ endowed with the semi-symmetric connection $%
\widetilde{\nabla }$ with unit vector field $U=e_{3}$, where $s$ denotes the
arc-length parameter. Then $\gamma $ is a semi-symmetric geodesic if and
only if 
\begin{equation*}
\gamma _{1}(s)=k_{1}+\frac{2c_{2}e^{3s+\frac{c_{3}}{2}}}{3\sqrt{1+c_{1}^{2}}}%
\text{ }_{2}F_{1}\left( \frac{3}{4},\frac{3}{2},\frac{7}{4}%
;-e^{4s+c_{3}}\right) ,
\end{equation*}%
\begin{equation*}
\gamma _{2}(s)=l_{1}+\frac{2c_{2}e^{3s+\frac{c_{3}}{2}}}{3\sqrt{1+c_{1}^{2}}}%
\text{ }_{2}F_{1}\left( \frac{3}{4},\frac{3}{2},\frac{7}{4}%
;-e^{4s+c_{3}}\right) ,
\end{equation*}%
\begin{equation*}
\gamma _{3}(s)=\frac{c_{2}e^{s}}{\sqrt{e^{c_{3}+4s}+1}},
\end{equation*}%
where $c_{1,2,3}$ and $k_{1},l_{1}$ are constants $\left( c_{2}>0\right) $
and $_{2}F_{1}$ is the hypergeometric function.
\end{theorem}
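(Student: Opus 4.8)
The plan is to convert $\widetilde{\nabla }_{T}T=0$ into an explicit system of ODEs and integrate it directly. Reading off the coefficients of $e_{1},e_{2},e_{3}$ in the expression for $\widetilde{\nabla }_{T}T$ displayed just above the statement, and recalling that $\gamma $ is unit-speed so that (\ref{unitH3}) holds, $\gamma $ is a semi-symmetric geodesic if and only if
\begin{align*}
\gamma _{1}^{\prime \prime }\gamma _{3}-3\gamma _{1}^{\prime }\gamma _{3}^{\prime }&=0,\\
\gamma _{2}^{\prime \prime }\gamma _{3}-3\gamma _{2}^{\prime }\gamma _{3}^{\prime }&=0,\\
\gamma _{3}^{\prime \prime }\gamma _{3}-\left( \gamma _{3}^{\prime }\right) ^{2}+2\left( \gamma _{1}^{\prime }\right) ^{2}+2\left( \gamma _{2}^{\prime }\right) ^{2}&=0,\\
\left( \gamma _{1}^{\prime }\right) ^{2}+\left( \gamma _{2}^{\prime }\right) ^{2}+\left( \gamma _{3}^{\prime }\right) ^{2}&=\gamma _{3}^{2}.
\end{align*}
Since $\gamma _{3}>0$ on $I$, the first two equations say exactly that $\gamma _{1}^{\prime }$ and $\gamma _{2}^{\prime }$ solve the linear equation $y^{\prime }=3\left( \gamma _{3}^{\prime }/\gamma _{3}\right) y$, so $\gamma _{1}^{\prime }=a\gamma _{3}^{3}$ and $\gamma _{2}^{\prime }=b\gamma _{3}^{3}$ for constants $a,b$.

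Substituting these into the unit-speed relation turns it into the autonomous first-order equation $\left( \gamma _{3}^{\prime }\right) ^{2}=\gamma _{3}^{2}-k\gamma _{3}^{6}$ with $k=a^{2}+b^{2}\geq 0$. Differentiating this identity gives $\gamma _{3}^{\prime \prime }=\gamma _{3}-3k\gamma _{3}^{5}$ (on the open set where $\gamma _{3}^{\prime }\neq 0$, hence everywhere by continuity; a constant $\gamma _{3}$ is excluded, since then the third equation forces $\gamma _{1}^{\prime }=\gamma _{2}^{\prime }=0$, contradicting unit speed), and substituting $\gamma _{3}^{\prime \prime }$ and $\left( \gamma _{3}^{\prime }\right) ^{2}$ back into the third equation makes it an identity. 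Hence the whole system is equivalent to $\gamma _{i}^{\prime }=a_{i}\gamma _{3}^{3}$ together with the single separable ODE $\left( \gamma _{3}^{\prime }\right) ^{2}=\gamma _{3}^{2}\left( 1-k\gamma _{3}^{4}\right) $.

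For $k>0$ I would integrate $\int d\gamma _{3}/\left( \gamma _{3}\sqrt{1-k\gamma _{3}^{4}}\right) =\pm \left( s-s_{0}\right) $: the substitutions $v=\gamma _{3}^{4}$ and then $w=\sqrt{1-kv}$ reduce the left-hand side to an elementary $\tfrac{1}{4}\log $-integral, and solving for $\gamma _{3}$ and renaming the two constants of integration as $c_{2}>0$ and $c_{3}$ gives $\gamma _{3}(s)=c_{2}e^{s}/\sqrt{e^{c_{3}+4s}+1}$; the case $k=0$ yields the vertical geodesics $\gamma _{3}=e^{\pm s+\mathrm{const}}$ with $\gamma _{1},\gamma _{2}$ constant, which appears as the limit $c_{3}\rightarrow -\infty $. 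Then $\gamma _{1}=a\int \gamma _{3}^{3}\,ds+k_{1}$; writing out $\gamma _{3}^{3}$ and substituting $t=e^{4s+c_{3}}$ turns the integral into $\int t^{-1/4}\left( 1+t\right) ^{-3/2}\,dt$, an incomplete-Beta-type integral evaluated by the identity $\int_{0}^{x}t^{\mu -1}\left( 1+t\right) ^{-\nu }\,dt=\frac{x^{\mu }}{\mu }\,{}_{2}F_{1}\left( \nu ,\mu ;\mu +1;-x\right) $ with $\mu =\tfrac{3}{4}$, $\nu =\tfrac{3}{2}$. Simplifying, and absorbing $a$ (resp. $b$) into the constants of integration—the remaining directional freedom encoded by $c_{1}$, subject to $a^{2}+b^{2}=4e^{c_{3}}c_{2}^{-4}$—produces the stated formulas for $\gamma _{1}$ and $\gamma _{2}$. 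For the converse one substitutes the three formulas into the system and (\ref{unitH3}); since each step above is reversible, this is a direct check.

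I expect the only genuinely delicate point to be the last step: recognising the final quadrature as a ${}_{2}F_{1}$ and bookkeeping the constants so that the normalisation with $\sqrt{1+c_{1}^{2}}$ comes out exactly. Everything preceding it is a linear first-order ODE plus one separable quadrature.
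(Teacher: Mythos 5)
Your proposal is correct and follows essentially the same route as the paper, which displays the component system $\gamma _{1}^{\prime \prime }\gamma _{3}-3\gamma _{1}^{\prime }\gamma _{3}^{\prime }=0$, $\gamma _{2}^{\prime \prime }\gamma _{3}-3\gamma _{2}^{\prime }\gamma _{3}^{\prime }=0$, $\gamma _{3}^{\prime \prime }\gamma _{3}-\left( \gamma _{3}^{\prime }\right) ^{2}+2\left( \gamma _{1}^{\prime }\right) ^{2}+2\left( \gamma _{2}^{\prime }\right) ^{2}=0$ together with the unit-speed relation immediately before the theorem and leaves the integration implicit; your reduction $\gamma _{i}^{\prime }=a_{i}\gamma _{3}^{3}$, the separable equation $\left( \gamma _{3}^{\prime }\right) ^{2}=\gamma _{3}^{2}\left( 1-k\gamma _{3}^{4}\right) $, and the ${}_{2}F_{1}$ quadrature supply exactly that omitted computation. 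One remark: your constraint $a^{2}+b^{2}=4e^{c_{3}}c_{2}^{-4}$ shows that, with the $\sqrt{1+c_{1}^{2}}$ normalization, the formula for $\gamma _{2}$ must carry an extra factor $c_{1}$ relative to $\gamma _{1}$ (as printed, the statement forces $c_{1}^{2}=1$), so your bookkeeping in fact corrects a typo in the stated formulas rather than reproducing them literally.
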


\section{\label{sect-examples}Examples of Semi-Symmetric Frenet Curves}

Next examples are based on the choice of $U$ as previous sections. Firstly,
we give examples in $\mathbb{E}^{3}$:

\begin{example}
$\gamma :I\rightarrow \mathbb{E}^{3},$ $\gamma \left( s\right) =\left(
s,0,1\right) $ is a semi-symmetric circle with $\widetilde{\kappa }=1$ and
the semi-symmetric Frenet frame%
\begin{equation*}
\left\{ T=\left( 1,0,0\right) ,\text{ }N=(0,0,-1)\right\} .
\end{equation*}%
It is obvious that it is a line (i.e. a geosedic) in the usual sense with no
curvature and torsion. It only has a tangential vector field $T=\left(
1,0,0\right) .$
\end{example}

\begin{example}
$\gamma :I\rightarrow \mathbb{E}^{3},$ $\gamma \left( s\right) =\left(
0,s,1\right) $ is also a semi-symmetric circle with $\widetilde{\kappa }=1.$
It is a line as well in the usual sense.
\end{example}

\begin{example}
$\gamma :I\rightarrow \mathbb{E}^{3},$ $\gamma \left( s\right) =\left( \cos
s,\sin s,0\right) $ is a semi-symmetric circle with $\widetilde{\kappa }=%
\sqrt{2}$ and the semi-symmetric Frenet frame field%
\begin{equation*}
\left\{ T=\left( -\sin s,\cos s,0\right) ,\text{ }N=\frac{1}{\sqrt{2}}\left(
-\cos s,-\sin s,1\right) \right\} .
\end{equation*}%
It is a circle with curvature $1$ in the usual sense.
\end{example}

\begin{example}
$\gamma :I\rightarrow \mathbb{E}^{3},$ $\gamma \left( s\right) =\left(
0,\cos s,\sin s\right) $ is a semi-symmetric Frenet curve of order $2$ with $%
\widetilde{\kappa }=1+\sin s$ and the Frenet frame field%
\begin{equation*}
\left\{ T=\left( 0,-\sin s,\cos s\right) ,\text{ }N=\left( 0,-\cos s,-\sin
s\right) \right\} .
\end{equation*}%
It is a circle with curvature $1$ in the usual sense. Notice that it is 
\textbf{not} a semi-symmetric circle.
\end{example}

\begin{example}
$\gamma :I\rightarrow \mathbb{E}^{3},$ $\gamma \left( s\right) =\left(
\gamma _{1}(s),\gamma _{2}(s),\gamma _{3}(s)\right) $ be a unit-speed
semi-symmetric Frenet curve of order $3$. Then 
\begin{equation*}
\left( \gamma _{1}^{\prime }\right) ^{2}+\left( \gamma _{2}^{\prime }\right)
^{2}+\left( \gamma _{3}^{\prime }\right) ^{2}=1
\end{equation*}%
and one can calculate%
\begin{equation*}
\widetilde{\kappa }=\sqrt{\left( \gamma _{1}^{\prime \prime }+\gamma
_{1}^{\prime }\gamma _{3}^{\prime }\right) ^{2}+\left( \gamma _{2}^{\prime
\prime }+\gamma _{2}^{\prime }\gamma _{3}^{\prime }\right) ^{2}+\left(
\gamma _{3}^{\prime \prime }+\left( \gamma _{3}^{\prime }\right)
^{2}-1\right) ^{2}},
\end{equation*}%
\begin{equation*}
\widetilde{\tau }=\frac{1}{\widetilde{\kappa }}\left\Vert \widetilde{\nabla }%
_{T}\widetilde{\nabla }_{T}T+\widetilde{\kappa }^{2}T-\widetilde{\kappa }%
^{\prime }N\right\Vert .
\end{equation*}
\end{example}

Secondly, we give examples in $\mathbb{R}^{3}(-3)$:

\begin{example}
$\gamma :I\rightarrow \mathbb{R}^{3}(-3),$ $\gamma \left( s\right) =\left(
0,2s,1\right) $ is a semi-symmetric helix with $\widetilde{\kappa }=%
\widetilde{\tau }=1$ and the semi-symmetric Frenet vector fields%
\begin{equation*}
T=X,
\end{equation*}%
\begin{equation*}
N=-\xi ,
\end{equation*}%
\begin{equation*}
B=Y.
\end{equation*}
\end{example}

\begin{example}
$\gamma :I\rightarrow \mathbb{R}^{3}(-3),$ $\gamma \left( s\right) =\left(
2s,0,1\right) $ is another semi-symmetric helix with $\widetilde{\kappa }=%
\widetilde{\tau }=1$ and the semi-symmetric Frenet vector fields%
\begin{equation*}
T=Y,
\end{equation*}%
\begin{equation*}
N=-\xi ,
\end{equation*}%
\begin{equation*}
B=-X.
\end{equation*}
\end{example}

\begin{example}
$\gamma :I\rightarrow \mathbb{R}^{3}(-3),$ $\gamma \left( s\right) =\left(
2\cos s,0,2\sin s\right) $ is \textbf{not} a semi-symmetric circle since it
has semi-symmetric curvature 
\begin{equation*}
\widetilde{\kappa }=\sqrt{4\sin ^{2}s\cos ^{2}s+\left( 1+\sin s\right) ^{2}}.
\end{equation*}
\end{example}

Finally an example will be given in $\mathbb{H}^{3}\left( -1\right) $ using%
\begin{equation*}
T=\frac{\gamma _{1}^{\prime }}{\gamma _{3}}e_{1}+\frac{\gamma _{2}^{\prime }%
}{\gamma _{3}}e_{2}-\frac{\gamma _{3}^{\prime }}{\gamma _{3}}e_{3},
\end{equation*}%
\begin{equation*}
\left( \gamma _{1}^{\prime }\right) ^{2}+\left( \gamma _{2}^{\prime }\right)
^{2}+\left( \gamma _{3}^{\prime }\right) ^{2}=\gamma _{3}.
\end{equation*}
\begin{eqnarray*}
\widetilde{\nabla }_{T}T &=&\frac{1}{\gamma _{3}^{2}}\left[ \left( \gamma
_{1}^{\prime \prime }\gamma _{3}-3\gamma _{1}^{\prime }\gamma _{3}^{\prime
}\right) e_{1}\right. \\
&&+\left( \gamma _{2}^{\prime \prime }\gamma _{3}-3\gamma _{2}^{\prime
}\gamma _{3}^{\prime }\right) e_{2} \\
&&\left. -\left( \gamma _{3}^{\prime \prime }\gamma _{3}-\left( \gamma
_{3}^{\prime }\right) ^{2}+2\left( \gamma _{1}^{\prime }\right) ^{2}+2\left(
\gamma _{2}^{\prime }\right) ^{2}\right) e_{3}\right] .
\end{eqnarray*}

\begin{example}
$\gamma :I\rightarrow \mathbb{H}^{3}(-1),$ $\gamma \left( s\right) =\left(
s,0,1\right) $ is a semi-symmetric circle with $T=e_{1},$ $N=-e_{3},$ $%
\widetilde{\kappa }=2.$
\end{example}

\end{document}